\newtheorem{theorem}{Theorem}
\newtheorem{proposition}[theorem]{Proposition}
\newtheorem{corollary}[theorem]{Corollary}
\theoremstyle{definition}
\newtheorem{definition}[theorem]{Definition}
\newtheorem{remark} [theorem] {Remark}
\newtheorem{problem} [theorem] {Problem}
\begin{document}
\title{On a calculus textbook problem}
\author{Arkady Kitover}
\address{Community College of Philadelphia, 1700 Spring Garden Street, Philadelphia, PA 19130}
\email{akitover@ccp.edu}
\author{Mehmet Orhon}
\address{Department of Mathematics and Statistics, University of New Hampshire, Durham,
NH 03824}
\email{mo@unh.edu}
\subjclass[2010]{Primary 97160; Secondary 97140}
\keywords{Elementary differential calculus, optimization problems}

\maketitle

\markboth{A.K.Kitover and M.Orhon}{Calculus textbook problem}
\begin{abstract} We consider generalizations of a well known elementary problem. A wire of the fixed length is cut into two pieces, one piece is bent into a circle and the second one into a square. What dimensions of the circle and the square will minimize their total area?

\end{abstract}

The following problem or its variants appear in many popular textbooks on elementary calculus (see e.g~\cite[Page 338, Problems 37 and 38]{St},  or~\cite[ Page 264, Problem 35]{La}, or~\cite[Page 284, Problem 14]{An} ).

\begin{problem} \label{pr1} Suppose we have a wire of length $L$; we cut the wire into two pieces and bend one piece into a circle of radius $r$ and the second - into a square of side $a$. What values of $r$ and $a$ will minimize the total area $S = \pi r^2 + a^2$?

\end{problem}

 Problem~\ref{pr1} is, of course, elementary and does not even require calculus. The relation $2\pi r +4a = L$ allows us to write $S$ as a quadratic function of $r$ and it is immediate to see that the solution is given by
 $$ a = 2r = \frac{L}{\pi + 4}.$$
 The most interesting thing here is, of course, that the circle can be inscribed into the square.

 The goal of this short paper is to look at the same problem from a slightly more general point of view. We hope that the simple observations below might be of interest to students of calculus as well as to some of our colleagues teaching it.

 \begin{definition} \label{d1} Let $C$ be a simple closed rectifiable curve on the plane $\mathds{R}^2$. Let $L$ be the arc length of $C$ and $A$ the area of the bounded region with the boundary $C$. Let $\lambda$ be a positive number. We will say that $C$ belongs to the class $[\lambda]$ if
 $$ A = \lambda L^2.$$

 \end{definition}

 \begin{remark} \label{r2} It follows from the isoperimetric inequality that $0 < \lambda \leq \frac{1}{4\pi}$ and that $\lambda  = \frac{1}{4\pi}$ if and only if $C$ is a circle. Recall also that if $C$ is a regular polygon with $m$ sides, $m \geq 3$, then  $\lambda = \frac{\cot{\pi/m}}{4m}$.

\begin{remark} \label{r4} The famous proof by Steiner that \textit{if the solution of the isoperimetric problem on the plane exists} then it must be a circle can be found in many books on recreational mathematics (see e.g.~\cite{RT}). A complete (including the proof of existence of the solution), short, and elementary proof can be found in~\cite{De}.

An elementary approach to isoperimetric inequality in three dimensions is contained in the books~\cite{Bl} and~\cite{Kr} (unfortunately not translated into English).

For the modern approach to the isoperimetric inequality (including higher dimensions) the reader is referred to the fundamental monograph~\cite{Fe}.

\end{remark}

 \end{remark}

 \begin{proposition} \label{p1} Let $n$ be a positive integer,  $n \geq 2$, and let $L, \lambda_1, \ldots , \lambda_n$ be positive real numbers. Let $C_1, \ldots , C_n$ be simple closed rectifiable curves with respective arc lengths being $L_i$ such that
 \begin{enumerate}
   \item $C_i \in [\lambda_i]$.
   \item $L_1 + \ldots + L_n =L$.
   \item The bounded regions $G_1, \ldots , G_n$ with respective boundaries $C_1, \ldots , C_n$ are pairwise disjoint.
 \end{enumerate}
 Then the total area $A = \sum \limits_{i=1}^n A_i$ of the region $\bigcup \limits_{i=1}^n G_i$ takes the smallest value if and only if
 $$\lambda_1 L_1 = \lambda_2 L_2 = \ldots = \lambda_n L_n.$$
 In particular, if we denote by $s_i$ the reciprocal of $\lambda_i$, $i= 1, \ldots , n$ then
 $$ L_i =\frac{s_iL}{s_1 + s_2 + \ldots + s_n}, i= 1, \ldots , n,$$
 and therefore the smallest value of $A$ is
 $$A_{min} = \frac{L^2}{s_1+ \ldots + s_n}. $$
 \end{proposition}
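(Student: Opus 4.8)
The plan is to separate the geometry from the optimization. The geometry contributes exactly one fact: since $C_i \in [\lambda_i]$, Definition \ref{d1} gives $A_i = \lambda_i L_i^2$, so the total area is
$$ A = \sum_{i=1}^n \lambda_i L_i^2 . $$
The optimization is then over the tuple $(L_1, \dots , L_n)$. I claim the only restriction on this tuple is $L_i > 0$ together with $L_1 + \cdots + L_n = L$: replacing each $C_i$ by its image under a similarity of ratio $t_i > 0$ multiplies $L_i$ by $t_i$ and $A_i$ by $t_i^2$, hence keeps $C_i$ in $[\lambda_i]$ while allowing $L_i$ to be any prescribed positive number; and once the lengths are fixed, the bounded regions $G_i$ can always be made pairwise disjoint by translating them apart. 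So the proposition reduces to: minimize $\sum_{i=1}^n \lambda_i L_i^2$ subject to $L_i > 0$ and $\sum_{i=1}^n L_i = L$.

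For this I would use the Cauchy--Schwarz inequality, written with the weights $\sqrt{\lambda_i}$:
$$ L \;=\; \sum_{i=1}^n L_i \;=\; \sum_{i=1}^n \bigl(\sqrt{\lambda_i}\,L_i\bigr)\frac{1}{\sqrt{\lambda_i}} \;\le\; \left(\sum_{i=1}^n \lambda_i L_i^2\right)^{1/2}\left(\sum_{i=1}^n \frac{1}{\lambda_i}\right)^{1/2} \;=\; \sqrt{A}\,\sqrt{s_1+\cdots+s_n}. $$
Squaring gives $A \ge L^2/(s_1+\cdots+s_n)$, and Cauchy--Schwarz is an equality exactly when the vectors $\bigl(\sqrt{\lambda_i}\,L_i\bigr)_i$ and $\bigl(1/\sqrt{\lambda_i}\bigr)_i$ are proportional, i.e.\ exactly when $\lambda_i L_i$ does not depend on $i$. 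Thus $A$ attains its smallest value, $L^2/(s_1+\cdots+s_n)$, if and only if $\lambda_1 L_1 = \cdots = \lambda_n L_n$. (A reader who prefers calculus can instead use a Lagrange multiplier: an interior critical point of $\sum_{i=1}^n \lambda_i L_i^2$ on the simplex satisfies $2\lambda_i L_i = \mu$ for every $i$; strict convexity makes it the unique minimizer, and since $\mu = 2\lambda_i L_i > 0$ it is automatically interior, so the boundary of the simplex needs no separate examination.)

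It remains to read off the dimensions. Writing $c$ for the common value of $\lambda_i L_i$ and $\Sigma = s_1 + \cdots + s_n$, we have $L_i = c/\lambda_i = c\,s_i$; summing over $i$ gives $c\,\Sigma = L$, hence
$$ L_i = \frac{s_i L}{s_1 + \cdots + s_n}, \qquad i = 1, \dots , n, $$
and substituting back, $A_{min} = \sum_{i=1}^n \lambda_i L_i^2 = \sum_{i=1}^n s_i^{-1}\bigl(s_i L/\Sigma\bigr)^2 = L^2/\Sigma$, as claimed.

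The single point that deserves care — the "main obstacle", modest as it is — is the reduction carried out in the first paragraph: one must be sure that insisting the $C_i$ be genuine simple closed rectifiable curves in their prescribed classes, with pairwise disjoint interiors, does not secretly couple the lengths $L_i$ to one another. The scaling and translation observations settle this, and after that the argument is the elementary inequality above, exactly in the spirit of the elementary solution of Problem \ref{pr1}.
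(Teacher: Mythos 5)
Your proof is correct, but it takes a genuinely different route from the paper's. The authors deliberately restrict themselves to induction on $n$ plus the single fact that a quadratic in one variable with positive leading coefficient is minimized at its vertex: the base case $n=2$ writes $A=(\lambda_1+\lambda_2)L_1^2-2\lambda_2 L L_1+\lambda_2 L^2$ and reads off the vertex, and the inductive step freezes $L_{n+1}$, applies the hypothesis to the first $n$ lengths, and minimizes one more quadratic in $L_{n+1}$. You instead get the whole statement in one stroke from Cauchy--Schwarz with weights $\sqrt{\lambda_i}$, which delivers the lower bound $A\ge L^2/(s_1+\cdots+s_n)$ and the equality condition $\lambda_1L_1=\cdots=\lambda_nL_n$ simultaneously, with no induction and no case analysis; your preliminary remark that the geometric constraints do not couple the $L_i$ (scaling within each class $[\lambda_i]$, then translating the regions apart) is a point the paper leaves implicit, and it is worth making. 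What the paper's approach buys is accessibility to the intended audience --- a first-semester calculus or even precalculus student who knows about vertices of parabolas but not about Cauchy--Schwarz --- and it foreshadows the same freeze-two-coordinates device used later in Proposition~\ref{p2}. What yours buys is brevity and the fact that the minimum value $L^2/(s_1+\cdots+s_n)$ appears as the natural right-hand side of the inequality rather than as an afterthought; your Lagrange-multiplier aside is also sound, since strict convexity and the positivity of the critical point dispose of the boundary of the simplex.
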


 \begin{proof} The proof we present here makes use only of the principle of mathematical induction and the elementary fact that a quadratic function of one variable with a positive leading coefficient takes its smallest value at the vertex.

 (1) The base of induction: $n=2$. In this case $A = \lambda_1 L_1^2 + \lambda_2 L_2^2 = \lambda_1 L_1^2 + \lambda_2(L - L_1)^2 =
 (\lambda_1 + \lambda_2)L_1^2 -2\lambda_2LL_1 + \lambda_2L^2$. This quadratic function of $L_1$ takes its smallest value only at the vertex and therefore $A$ takes the smallest value if and only if $L_1 = \frac{\lambda_2 L}{\lambda_1 + \lambda_2} = \frac{s_1L}{s_1 + s_2}$. Thus the base of induction is proved.

 (2) The induction step: assume that our statement is true for a natural $n \geq 2$; we claim that it remains true for $n+1$. Using the induction assumption we can write that
 $$A = \lambda_1 L_1^2 + \ldots + \lambda_n L_n^2 + \lambda_{n+1} L_{n+1}^2 \geq \lambda_1 \hat{L}_1^2 + \ldots + \lambda_n \hat{L}_n^2 + \lambda_{n+1} L_{n+1}^2, $$
 where
 $$ \hat{L}_i = \frac{s_i(L - L_{n+1})}{s_1 + \ldots + s_n}, \; i=1, \ldots , n,$$
 and the inequality is strict unless $L_i = \hat{L}_i, i= 1, \ldots , n$.

 An elementary computation shows that
 $$ A \geq \frac{(L - L_{n+1})^2}{s_1 + \ldots + s_n} + \frac{L_{n+1}^2}{s_{n+1}}. \eqno{(1)}$$
 The right hand side of $(1)$ is a quadratic function of $L_{n+1}$ :
  $$f(L_{n+1}) = \big{(} \frac{1}{s_1 + \ldots + s_n} + \frac{1}{s_{n+1}}\big{)} L_{n+1}^2 - \frac{2L}{s_1+ \ldots + s_n}L_{n+1} + \frac{L^2}{s_1 + \ldots + s_n}.$$
  This quadratic function takes its smallest value only at the vertex where
  $$L_{n+1} = \frac{s_{n+1}L}{s_1 + \ldots + s_{n+1}}.$$
 \end{proof}

 \begin{remark} \label{r3}

 \begin{enumerate}
   \item We can avoid the use of mathematical induction in the proof of Proposition~\ref{p1}. But then the price we have to pay is to assume that the student is familiar with the fact that a continuous function of $n$ variables attains its smallest value on a compact subset of $\mathds{R}^n$. Notice that in our case $A$ is obviously a continuous function of $L_1, \ldots , L_n$ considered on the compact subset $K$ of $\mathds{R}^n$, where
       $$K = \{(L_1, \ldots , L_n) : L_i \geq 0, i=1, \ldots , n, L_1 + \ldots L_n = L. \} $$
        See the corresponding reasoning in the proof of Proposition~\ref{p2}.

       It might also be of interest to notice that because in Proposition~\ref{p1} $A$ is a \textit{quadratic} function of $n$ variables the fact that it attains its smallest value on $K$ can be proved without involving  either compactness or mathematical induction by means of elementary linear algebra (see~\cite[Theorem 7]{Ja}).
   \item Instead of involving rectifiable curves we can speak in a calculus class of piecewise continuously differentiable curves or in a precalculus class of polygons with sides being either segments of straight lines or circular arcs.
   \item The statement and proof of Proposition~\ref{p1} will remain the same if instead of regions bounded by simple closed curves we consider the following more general situation. Let $O$ be a bounded open subset of $\mathds{R}^2$ such that its boundary is the union of an at most countable set of rectifiable simple curves of finite total length.
 \end{enumerate}

 \end{remark}

 Let us recall that a polygon is called \textit{tangential} if we can inscribe a circle into it, i.e. if there is a circle tangent to every side of the polygon. Of course all triangles and regular polygons are tangential. The interested reader is referred to the Wikipedia articles ``Tangential polygons"  and ``Tangential quadrilaterals" as well as the sources cited there.

 The next corollary of Proposition~\ref{p1} is a direct generalization of Problem~\ref{pr1}.

 \begin{corollary} \label{c1}  Let $n$ be a positive integer,  $n \geq 2$, and let $C_i$, $1 \leq i \leq n$, be either a convex tangential polygon such that the radius of the inscribed circle is $r_i$, or a circle of radius $r_i$. Let the regions $D_i$ bounded by the curves $C_i$ be pairwise disjoint.
  Then the total area $A$ of the set $\bigcup \limits_{i=1}^n D_i$ takes its smallest value if and only if $r_1 = r_2 = \ldots = r_n$.

 \end{corollary}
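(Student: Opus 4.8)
The plan is to derive this straight from Proposition~\ref{p1}: the only work is to translate the condition $\lambda_1 L_1 = \cdots = \lambda_n L_n$ appearing there into the condition $r_1 = \cdots = r_n$. Here the shape of each $C_i$ is understood to be prescribed (a fixed convex tangential polygon up to similarity, or a circle), only the size of $C_i$ being allowed to vary, and the total perimeter $L_1 + \cdots + L_n = L$ is fixed as in Problem~\ref{pr1}; this is exactly the generalization of that problem, in which the square and the circle were the two prescribed shapes.

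First I would verify that each $C_i$ belongs to some class $[\lambda_i]$ in the sense of Definition~\ref{d1}, with $\lambda_i$ a constant depending only on the shape of $C_i$, not on its size. For a circle this is part of Remark~\ref{r2} (with $\lambda_i = \frac{1}{4\pi}$). For a fixed convex tangential polygon, the area $A_i$ and the perimeter $L_i$ scale respectively as the square and the first power of the linear size, so the ratio $A_i/L_i^2$ is scale-invariant; denote it $\lambda_i$. Thus $C_i \in [\lambda_i]$ for a fixed $\lambda_i$, and Proposition~\ref{p1} applies: the total area $A = \sum_{i=1}^n A_i$ is smallest if and only if
$$\lambda_1 L_1 = \lambda_2 L_2 = \cdots = \lambda_n L_n.$$

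The key step is the identity $A_i = \tfrac{1}{2}\, r_i L_i$, valid for a circle (where $L_i = 2\pi r_i$, $A_i = \pi r_i^2$) and, more substantially, for a convex tangential polygon. In the polygonal case I would prove it by joining the incenter of $C_i$ to the vertices: since $C_i$ is convex and tangential, this decomposes $D_i$ into non-overlapping triangles, each having one side of $C_i$ as base and the inradius $r_i$ as the corresponding height, whence $A_i = \tfrac{1}{2}\, r_i L_i$ (the classical formula ``area $=$ inradius $\times$ semiperimeter''). Combining this with $\lambda_i = A_i / L_i^2$ gives
$$\lambda_i L_i = \frac{A_i}{L_i} = \frac{r_i}{2}, \qquad i = 1, \ldots, n,$$
so the optimality condition of Proposition~\ref{p1} reads precisely $r_1 = r_2 = \cdots = r_n$, which is the assertion of the corollary.

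The only genuine obstacle is the area formula $A_i = \tfrac12 r_i L_i$ in the tangential case; everything else is bookkeeping around Proposition~\ref{p1} and the scaling behaviour of area and length. It is worth remarking that this same identity explains the ``circle inscribed in the square'' phenomenon of Problem~\ref{pr1}: whatever the lengths of the two pieces of wire, the resulting square and circle satisfy $\lambda_i L_i = r_i/2$, so minimizing the total area forces their inradii to coincide, i.e. the optimal circle is exactly the incircle of the optimal square.
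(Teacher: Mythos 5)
Your proof is correct and follows the same route as the paper: the paper's entire argument is to invoke Proposition~\ref{p1} together with the identity $A_i = \tfrac{1}{2} r_i L_i$, which is exactly the translation $\lambda_i L_i = r_i/2$ that you carry out. You have simply supplied the details (scale-invariance of $\lambda_i$, the incenter decomposition proving the area formula) that the paper leaves implicit.
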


 \begin{proof}  The statement of Corollary~\ref{c1} follows immediately from Proposition~\ref{p1} and the formula $A_i = \frac{1}{2} r_i L_i$.
 \end{proof}

 We now proceed with generalizations involving surface areas and volumes in $\mathds{R}^3$.

 Let us consider a surface $C \subset \mathds{R}^3$ with the following two properties.

 \noindent (1) $C$ is homeomorphic to the sphere $S^2$.

 \noindent (2) $C$ is piecewise smooth (i.e. piecewise of class $C^{(1)})$.

  Condition (1) together with the Jordan - Brouwer separation theorem (see e.g.~\cite{Li}) guarantees that $\mathds{R}^3 \setminus C$ is the union of two disjoint regions $E$ and $F$, the region $E$ is bounded in $\mathds{R}^3$, and its boundary is $C$. In particular, the volume of $E$ is finite and we will denote it by $V_C$.

  Condition (2) guarantees that the surface area of $C$ is well defined and finite. We will denote it by $A_C$. Moreover, the volume of $C$ in $\mathds{R}^3$ is $0$.

 We will say that a surface $C$ satisfying conditions (1) and (2) belongs to the class $[\lambda]$, where $\lambda$ is a positive real number, if
 $$ V_C = \lambda (A_C)^{3/2}.$$

 Notice that $0 < \lambda \leq \frac{1}{6\sqrt{\pi}}$, where equality is attained if and only if $C$ is a two-dimensional sphere (see Remark~\ref{r4}).

 \begin{proposition} \label{p2} Let $n$ be a positive integer number and $A$ be a positive real number. Let $C_1, \ldots , C_n$ be surfaces in $\mathds{R}^3$ such that

\noindent (a) $C$ is homeomorphic to the sphere $S^2$, $i=1, \cdots , n$.

\noindent (b) $C$ is piecewise smooth (i.e. piecewise of class $C^{(1)})$, $i=1, \cdots , n$.

 \noindent (c) $$C_i \in [\lambda_i], i = 1, \ldots, n.$$

\noindent (d) The regions $E_1, \ldots , E_n$, where $E_i$ is the bounded region with the boundary $C_i$, are pairwise disjoint.

 \noindent (e)
 $$ A_{C_1} + \ldots A_{C_n} = A.$$
 Then the total volume
 $$ V = V_{C_1} + \ldots + V_{C_n}$$
 takes its smallest value if and only if
 $$\lambda_1^2 A_{C_1} = \lambda_2^2 A_{C_2} = \ldots = \lambda_n^2 A_{C_n}$$

 \end{proposition}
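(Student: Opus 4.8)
The plan is to reduce the statement to a finite-dimensional minimization, in the spirit of Remark~\ref{r3}(1). Since each $C_i$ belongs to $[\lambda_i]$, its volume is determined by its surface area: $V_{C_i} = \lambda_i (A_{C_i})^{3/2}$. Setting $x_i = A_{C_i}$, the total volume $V = \sum_{i=1}^n \lambda_i x_i^{3/2}$ is a function of $(x_1,\ldots,x_n)$ alone, and the admissible vectors are precisely those of the compact simplex
$$ K = \{(x_1,\ldots,x_n) : x_i \geq 0, \ i = 1,\ldots,n, \ x_1 + \ldots + x_n = A\}. $$
Indeed, any such vector with positive entries is realized by a genuine configuration: fix one representative of each class $[\lambda_i]$, rescale it so that its surface area equals $x_i$ (a dilation by a factor $t$ multiplies area by $t^2$ and volume by $t^3$, hence preserves the class), and translate the $n$ resulting bodies far enough apart that their bounded regions are pairwise disjoint. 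So it suffices to locate the minimum of $f(x) = \sum_{i=1}^n \lambda_i x_i^{3/2}$ on $K$; the case $n = 1$ being trivial, assume $n \geq 2$.

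First I would obtain existence by compactness: $f$ is continuous and $K$ is compact, so $f$ attains its minimum at some $\bar{x} \in K$. Next I would rule out the boundary, i.e. show that $\bar{x}_i > 0$ for every $i$. If $\bar{x}_i = 0$, then since $n \geq 2$ and $A > 0$ some $\bar{x}_j > 0$, and transferring a small area $\varepsilon > 0$ from the $j$-th body to the $i$-th body changes $f$ by $\lambda_i \varepsilon^{3/2} + \lambda_j(\bar{x}_j - \varepsilon)^{3/2} - \lambda_j \bar{x}_j^{3/2} = -\frac{3}{2}\lambda_j \sqrt{\bar{x}_j}\,\varepsilon + o(\varepsilon) < 0$ for small $\varepsilon$, contradicting minimality.

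Finally I would identify $\bar{x}$ by a two-variable exchange argument that parallels the base case $n = 2$ of Proposition~\ref{p1}. Fix a pair $i \neq j$, freeze the other coordinates, and put $c = \bar{x}_i + \bar{x}_j$; along the corresponding segment of $K$, $f$ equals a constant plus $g(t) = \lambda_i t^{3/2} + \lambda_j (c - t)^{3/2}$, $t \in [0,c]$. Since $g''(t) = \frac{3}{4}\big(\lambda_i t^{-1/2} + \lambda_j (c-t)^{-1/2}\big) > 0$ on $(0,c)$, $g$ is strictly convex and has a unique minimizer, namely the solution of $g'(t) = 0$, which is $\lambda_i \sqrt{t} = \lambda_j \sqrt{c - t}$, i.e. $\lambda_i^2 t = \lambda_j^2 (c - t)$. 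Minimality of $\bar{x}$ (together with $\bar{x}_i \in (0,c)$ from the previous paragraph) forces $\bar{x}_i$ to be this minimizer, so $\lambda_i^2 \bar{x}_i = \lambda_j^2 \bar{x}_j$; as $i,j$ were arbitrary, $\lambda_1^2 \bar{x}_1 = \ldots = \lambda_n^2 \bar{x}_n$. Conversely, these equalities together with $x_1 + \ldots + x_n = A$ single out a unique point of $K$, namely $\bar{x}_i = s_i^2 A/(s_1^2 + \ldots + s_n^2)$ with $s_i = 1/\lambda_i$; since a minimizer exists and must satisfy the equalities, this point is the unique minimizer, and the ``if and only if'' follows.

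The only genuinely new feature, compared with Proposition~\ref{p1}, is that $x \mapsto x^{3/2}$ is not quadratic, so the tidy quadratic-vertex induction is unavailable; accordingly the existence step --- and within it the verification that the minimum avoids the boundary of $K$ --- is where a little care is required. A shortcut bypassing compactness altogether is to apply H\"older's inequality with exponents $3/2$ and $3$ to the identity $x_i = (\lambda_i x_i^{3/2})^{2/3}\,\lambda_i^{-2/3}$, which gives $A \leq V^{2/3}\big(\sum_{i=1}^n \lambda_i^{-2}\big)^{1/3}$, hence $V \geq A^{3/2}/\sqrt{s_1^2 + \ldots + s_n^2}$, with equality exactly when $x_i$ is proportional to $s_i^2$, i.e. when the numbers $\lambda_i^2 A_{C_i}$ all coincide; this variant even records the extremal value $V_{min} = A^{3/2}/\sqrt{s_1^2 + \ldots + s_n^2}$.
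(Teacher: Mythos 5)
Your main argument is correct and follows essentially the same route as the paper: existence of a minimizer of $V=\sum\lambda_i A_i^{3/2}$ on the compact simplex $K$ by continuity, followed by a two-variable exchange based on the strict convexity of $t\mapsto \lambda_i t^{3/2}+\lambda_j(c-t)^{3/2}$, exactly as in the paper's reduction of the general case to $n=2$. Your explicit exclusion of the boundary of $K$ and the remark on realizability of each point of $K$ by actual configurations are minor refinements the paper leaves implicit (the paper's $n=2$ step already covers boundary pairs, since a pair with $\lambda_i^2\tilde A_i\neq\lambda_j^2\tilde A_j$ can be improved even when one coordinate vanishes). The one genuinely different element is your closing H\"older argument with exponents $3/2$ and $3$: it bypasses compactness and the exchange step entirely, characterizes the equality case as $A_{C_i}$ proportional to $s_i^2=\lambda_i^{-2}$, and in addition records the extremal value $V_{min}=A^{3/2}/\sqrt{s_1^2+\ldots+s_n^2}$, which the paper does not state for this proposition (though it states the analogous value in Proposition~\ref{p1}); that variant is a worthwhile addition, at the cost of invoking an inequality beyond the paper's deliberately elementary toolkit.
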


 \begin{proof} Consider first the case, when $n=2$. Let us denote $A_{C_1}$ and $A_{C_2}$ by $A_1$ and $A_2$, respectively. Then the total volume $V$ is the following function of $A_1$,
 $$ V = \lambda_1 A_1^{3/2} + \lambda_2 (A - A_1)^{3/2}, \; 0 \leq A_1 \leq A.$$
 Thus
 $$ \frac{dV}{dA_1} = \frac{3}{2}[\lambda_1 \sqrt{A_1} - \lambda_2 \sqrt{A - A_1}], $$
 and at the critical point we have
 $$\lambda_1^2 A_1 = \lambda_2^2 (A- A_1) = \lambda_2^2 A_2.$$
 Moreover,
 $$ \frac{d^2 V}{dA_1^2} = \frac{3}{4} \Big{(} \frac{\lambda_1}{\sqrt{A_1}} + \frac{\lambda_2}{\sqrt{A-A_1}} \Big{)} > 0 \; \mathrm{on} \; [0, A],$$
 and therefore the critical point is the only point in $[0, A]$ where the function $V = V(A_1)$ attains its absolute minimum.

 Now consider the general case. Let us again write $A_i$ instead of $A_{C_i}$, $i =1, \ldots , n$. The function
 $$ V = \sum \limits_{i=1}^n \lambda_i A_i^{3/2}$$
 is continuous on the compact subset $K$ of $\mathds{R}^n$ defined as
 $$K = \{(A_1, \ldots , A_n) : A_i \geq 0 \; \mathrm{and} \; A_1 + \ldots + A_n = A \},$$
 and therefore attains its minimum value on this set. Let $(\tilde{A}_1, \ldots ,\tilde{A}_n)$ be a point where the minimum is attained and assume  contrary to our statement that there are $i$ and $j$, $ 1 \leq i < j \leq n$ such that $\lambda_i^2 \tilde{A}_i \neq \lambda_j^2 \tilde{A}_j$. Let $\hat{A}_i$ and $\hat{A}_j$ be such positive real numbers that $\hat{A}_i + \hat{A}_j = \tilde{A}_i + \tilde{A}_j$ and
 $\lambda_i^2 \hat{A}_i = \lambda_j^2 \hat{A}_j$. Then by the first step of the proof the value of $V$ at the point of $\mathds{R}^n$ where the $i^{th}$ and $j^{th}$ coordinates are changed from $\tilde{A}_i$ and $\tilde{A}_j$ to $\hat{A}_i$ and $\hat{A}_j$, respectively will be strictly less then the value at $(\tilde{A}_1, \ldots , \tilde{A}_n)$, a contradiction.
 \end{proof}

 \begin{corollary} \label{c2} Let $n$ be a positive integer, $n \geq2$. Assume that for every $i \in [1 : n]$ $C_i$ is either a sphere of radius $r_i$ or the surface of a convex tangential polyhedron with the radius of the inscribed sphere being $r_i$. In particular, $C_i$ can be the surface of one of the platonic solids. Assume that the bounded regions $E_i$ with boundaries $C_i$ are pairwise disjoint.  Then the volume $V$ of the region $\bigcup \limits_{i=1}^n E_i$  takes its smallest value if and only if $r_1 = r_2 = \ldots = r_n$.
  \end{corollary}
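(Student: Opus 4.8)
The plan is to deduce Corollary~\ref{c2} from Proposition~\ref{p2} exactly as Corollary~\ref{c1} was deduced from Proposition~\ref{p1}, the planar identity $A_i=\frac12 r_iL_i$ being replaced by its spatial analogue. So the first step is to record that if $C$ is either a sphere of radius $r$ or the surface of a convex tangential polyhedron with inscribed sphere of radius $r$, then
$$ V_C=\frac13\,r\,A_C . $$
For a sphere this is the familiar $\frac43\pi r^3=\frac13 r\cdot 4\pi r^2$. For a convex tangential polyhedron, let $O$ be the center of the inscribed sphere; since $O$ is an interior point and the polyhedron is convex, joining $O$ to each face decomposes the solid into pyramids with pairwise disjoint interiors whose union is the whole solid, and the pyramid over a given face has height equal to the distance from $O$ to the plane of that face, which is $r$ because the inscribed sphere is tangent to that face. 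Summing the pyramid volumes $\frac13 r\cdot(\text{area of the face})$ over all faces yields the identity.

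The second step is bookkeeping. The ratio $V_C/A_C^{3/2}$ is invariant under similarity (volume scales as the cube and surface area as the square of the linear size), so setting $\lambda_i=V_{C_i}/A_{C_i}^{3/2}$ we have $C_i\in[\lambda_i]$ in the sense of the definition preceding Proposition~\ref{p2}, and by the identity just proved
$$ \lambda_i=\frac{V_{C_i}}{A_{C_i}^{3/2}}=\frac{r_i}{3\sqrt{A_{C_i}}},\qquad\text{so that}\qquad \lambda_i^2A_{C_i}=\frac{r_i^2}{9}. $$

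Finally, applying Proposition~\ref{p2} (with the total surface area $A=A_{C_1}+\dots+A_{C_n}$ held fixed), the total volume $V$ is smallest precisely when $\lambda_1^2A_{C_1}=\dots=\lambda_n^2A_{C_n}$, which by the previous display means $r_1^2=\dots=r_n^2$, i.e.\ $r_1=\dots=r_n$ since the radii are positive. The only step that needs genuine care is the identity $V_C=\frac13 rA_C$: one has to be certain that for a convex tangential polyhedron the cone construction from the incenter really does partition the solid into pyramids over the faces. This is where convexity is used, and it is routine but worth a sentence; everything else is a direct substitution into Proposition~\ref{p2}.
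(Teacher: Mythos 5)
Your proof is correct and follows the same route as the paper: the paper likewise deduces the corollary from Proposition~\ref{p2} together with the identity $V(E_i)=\frac{1}{3}r_iA_i$. You have merely supplied the details (the pyramid decomposition from the incenter and the computation $\lambda_i^2A_{C_i}=r_i^2/9$) that the paper leaves implicit.
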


  \begin{proof} The proof follows from Proposition~\ref{p2} and the formula $V(E_i) = \frac{1}{3} r_i A_i$, where $A_i$ is the surface area of the surface $C_i$.

  \end{proof}

  In addition to the case of tangential polyhedra the following special case might be of interest.

  \begin{proposition} \label{p4} Let $n$ be a positive integer, $n\geq2$, Let $C_i$, $i=1 \ldots , n$ be a simple closed rectifiable curve in $\mathds{R}^2$ with the length $L_i$. Let $C_i \in [\lambda_i]$ (see Definition~\ref{d1}). Assume that the closed bounded regions $D_i$ with the boundaries $C_i$ have pairwise disjoint interiors.
Let $E_i$ be the closed right ``cylindrical'' solid in $\mathds{R}^3$, $E_i = D_i \times [0,h]$ where $h$ - the height of the cylinder - is a positive number not depending on $i$. Assume that the total surface area of the cylinders $E_i, i = 1, \ldots , n$ has a fixed value $S$.

Then the total volume $V$ of all the cylinders will take the smallest value if and only if
 $$\lambda_1 L_1 = \lambda_2 L_2 = \ldots = \lambda_n L_n.$$

  \end{proposition}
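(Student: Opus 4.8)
The plan is to use the prescribed total surface area to eliminate the quantity $\sum_{i=1}^{n}\lambda_i L_i^{2}$ from the problem, turning the statement into an instance of Proposition~\ref{p1} used as a sharp inequality. The starting point is provided by the two elementary formulas for the right cylinder $E_i=D_i\times[0,h]$: since $C_i\in[\lambda_i]$ the base $D_i$ has area $A_i=\lambda_i L_i^{2}$, so the volume of $E_i$ is $V_i=hA_i=h\lambda_i L_i^{2}$, and the total area of its boundary (two copies of $D_i$ plus the lateral band $C_i\times[0,h]$) is $S_i=2\lambda_i L_i^{2}+hL_i$. Only $\lambda_i$ and $L_i$ enter these formulas, so the optimization runs effectively over the tuples $(L_1,\dots,L_n)$ of admissible lengths. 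Writing $P:=\sum_{i=1}^{n}L_i$ and summing over $i$, the hypothesis $\sum_i S_i=S$ reads $2\sum_i\lambda_i L_i^{2}+hP=S$, while $V=h\sum_i\lambda_i L_i^{2}$; eliminating $\sum_i\lambda_i L_i^{2}$ gives
$$ V=\frac{hS}{2}-\frac{h^{2}}{2}\,P . $$
Since $h>0$ is fixed, minimizing $V$ over admissible configurations is precisely the same as \emph{maximizing the total base length} $P$.

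To bound $P$, I would invoke Proposition~\ref{p1} with total length $P$ (equivalently, the Cauchy--Schwarz inequality applied to $\sqrt{\lambda_i}\,L_i$ and $1/\sqrt{\lambda_i}$), which gives
$$ \sum_{i=1}^{n}\lambda_i L_i^{2}\ \ge\ \frac{P^{2}}{s_1+\cdots+s_n}, $$
with equality if and only if $\lambda_1 L_1=\cdots=\lambda_n L_n$. Feeding this into the constraint $2\sum_i\lambda_i L_i^{2}+hP=S$ produces the quadratic inequality
$$ 2P^{2}+h(s_1+\cdots+s_n)P-S(s_1+\cdots+s_n)\ \le\ 0 . $$
The left-hand side is an upward parabola in $P$ taking the negative value $-S(s_1+\cdots+s_n)$ at $P=0$, hence it has a single positive root
$$ P^{\ast}=\frac{-h(s_1+\cdots+s_n)+\sqrt{\,h^{2}(s_1+\cdots+s_n)^{2}+8S(s_1+\cdots+s_n)\,}}{4}, $$
and every admissible configuration satisfies $P\le P^{\ast}$. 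As $V=\tfrac{hS}{2}-\tfrac{h^{2}}{2}P$ is strictly decreasing in $P$, this yields $V\ge V^{\ast}$ with $V^{\ast}=\tfrac{hS}{2}-\tfrac{h^{2}}{2}P^{\ast}=\dfrac{h\,(P^{\ast})^{2}}{s_1+\cdots+s_n}$ (the last equality using the defining equation of $P^{\ast}$), and $V=V^{\ast}$ forces $P=P^{\ast}$.

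Finally I would read off the equality case. If $P=P^{\ast}$ the quadratic inequality above is an equality, which forces $\sum_i\lambda_i L_i^{2}=(P^{\ast})^{2}/(s_1+\cdots+s_n)$; by the equality clause of Proposition~\ref{p1} this happens exactly when $\lambda_1 L_1=\cdots=\lambda_n L_n$. Conversely, if $\lambda_1 L_1=\cdots=\lambda_n L_n$ then $L_i=s_iP/(s_1+\cdots+s_n)$ and $\sum_i\lambda_i L_i^{2}=P^{2}/(s_1+\cdots+s_n)$, so the constraint pins $P$ down to $P^{\ast}$ and $V=V^{\ast}$. Such configurations do occur, so the minimum is genuinely attained: each class $[\lambda_i]$ is dilation-invariant, hence the curve $C_i$ can be chosen of any prescribed length, in particular of length $L_i=s_iP^{\ast}/(s_1+\cdots+s_n)$. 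This gives the asserted equivalence.

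The computations are routine; what takes a moment's thought --- the heart of the argument rather than a genuine obstacle --- is the reduction in the first paragraph: recognizing that the surface-area constraint converts ``minimize total volume'' into ``maximize total base length'', so that Proposition~\ref{p1}, read as a sharp inequality, finishes the job. The only other point needing a remark is that the extremal configuration is admissible, which is immediate from scale-invariance of the classes $[\lambda_i]$.
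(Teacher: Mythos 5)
Your proof is correct, but it takes a genuinely different route from the paper. The paper minimizes $V/h=\sum_i\lambda_iL_i^2$ subject to the surface-area constraint by Lagrange multipliers: the stationarity conditions $2\lambda_iL_i=\gamma(h+4\lambda_iL_i)$ force $\lambda_iL_i=\lambda_jL_j$ (else $\gamma=1/2$ and $h=0$), and then a bordered-Hessian computation plus convexity of $V$ certify that this critical point is the unique global minimum. You instead notice that the objective and the nonlinear part of the constraint are proportional: since $V=h\sum_i\lambda_iL_i^2$ and $S=2\sum_i\lambda_iL_i^2+hP$ with $P=\sum_iL_i$, the constraint gives $V=\tfrac{hS}{2}-\tfrac{h^2}{2}P$ outright, so minimizing $V$ is literally maximizing the total perimeter $P$; the sharp bound $\sum_i\lambda_iL_i^2\ge P^2/(s_1+\cdots+s_n)$ (Proposition~\ref{p1}, or Cauchy--Schwarz) then caps $P$ at the positive root $P^\ast$ of the resulting quadratic, with equality exactly when the $\lambda_iL_i$ coincide. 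This buys several things: it is first-order-free (no multipliers, no second-order test, no separate uniqueness argument --- global optimality and the equality case come out of one strict inequality), it produces closed forms for $P^\ast$, the optimal $L_i$, and $V^\ast=h(P^\ast)^2/(s_1+\cdots+s_n)$, and it explicitly addresses attainability via dilation-invariance of the classes $[\lambda_i]$, a point the paper leaves implicit. What the paper's method buys is routine generalizability --- the same Lagrange template works when the objective is not an affine function of the constraint's linear part, whereas your reduction exploits the specific coincidence that $V$ and $\sum_i A_i$ are proportional. Both arguments are sound; yours is the more elementary and more self-contained of the two.
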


\begin{proof} Because $h$ is fixed we need to minimize
 $$V/h = A_1 + \ldots + A_n = \lambda_1 L_1^2 + \ldots + \lambda_n L_n^2$$
 subject to the constraint
  $$(L_1 + \ldots + L_n)h +2(A_1 + \ldots + A_n) = (L_1 + \ldots + L_n)h +2(\lambda_1 L_1^2 + \ldots + \lambda_n L_n^2) = S. $$
 The method of Lagrange multipliers provides
 $$2\lambda_i L_i = \gamma (h + 4\lambda_i L_i), i = 1, \ldots, n, \eqno{(2)}$$
 where $\gamma$ is the Lagrange multiplier. Let $1 \leq i < j \leq n$. Then
 $$ 2(\lambda_j L_j - \lambda_i L_i) = 4\gamma (\lambda_j L_j - \lambda_i L_i).$$
 If $\lambda_j L_j - \lambda_i L_i \neq 0$ then $\gamma = 1/2$ and from $(2)$ we get $h = 0$, a contradiction.

 Strictly speaking we are not done because we need to prove that the conditions $\lambda_i L_i = \lambda_j L_j, 1 \leq i \leq j \leq n$, define a minimum of $V$ and that this minimum is unique. As in the proof of Proposition~\ref{p2} it is enough to prove our statement for the case $n=2$.
 Thus we consider $V/h = \lambda_1 L_1^2 + \lambda_2 L_2^2$ subject to the constraint $(L_1 + L_2)h + 2\lambda_1 L_1^2 + 2\lambda_2 L_2^2 = S$.
 The corresponding border Hessian (see~\cite[Page 383]{Ch}) is the determinant
 \[ \left| \begin{array}{ccc}
0 & 4\lambda_1 L_1 + h & 4\lambda_2 L_2 +h \\
4\lambda_1 L_1 + h & 2\lambda_1 & 0 \\
4\lambda_2 L_2 + h & 0 & 2\lambda_2 \end{array} \right |.\]
It is easy to see that this determinant is equal to $-\lambda_1(4\lambda_2 L_2 +h)^2 - \lambda_2(4\lambda_1 L_2 +h)^2 < 0$. Therefore (see again~\cite[Page 383]{Ch}) the condition $\lambda_1 L_1 = \lambda_2 L_2$ is not only necessary but also sufficient for the corresponding point to be a local minimum of $V$. Finally, notice that $V$ is a strongly convex function (indeed, its Hessian is the diagonal matrix with eigenvalues $2\lambda_1$ and $2\lambda_2$ ) and hence its local minimum is global and unique.
\end{proof}

\begin{corollary} \label{c4}  Let $n$ be a positive integer, $n\geq2$, Let $C_i$, $i=1 \ldots , n$ be either a tangential polygon with the radius of the inscribed circle equal to $r_i$ or a circle of radius $r_i$. Assume that the closed bounded regions $D_i$ with the boundaries $C_i$ have pairwise disjoint interiors.
Let $E_i$ be the closed right ``cylindrical'' solid in $\mathds{R}^3$, $E_i = D_i \times [0,h]$ where $h$ - the height of the cylinder - is a positive number not depending on $i$. Assume that the total surface area of the cylinders $E_i, i = 1, \ldots , n$ has a fixed value $S$.

Then the total volume $V$ of all the cylinders will take the smallest value if and only if
 $$r_1 = r_2 = \cdots = r_n$$

\end{corollary}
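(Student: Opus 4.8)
The plan is to deduce Corollary~\ref{c4} from Proposition~\ref{p4} by exactly the same device that turned Proposition~\ref{p1} into Corollary~\ref{c1}: the classical identity expressing the enclosed area of a tangential polygon (or a circle) as the product of the inradius and the semiperimeter. First I would record that for each $i$, whether $C_i$ is a circle of radius $r_i$ or a convex tangential polygon whose inscribed circle has radius $r_i$, the area $A_i$ of $D_i$ and the perimeter $L_i$ of $C_i$ satisfy
$$A_i = \frac{1}{2} r_i L_i.$$
For the circle this is just $\pi r_i^2 = \frac{1}{2} r_i (2\pi r_i)$; for the tangential polygon it is the decomposition of $D_i$ into triangles with common vertex at the incenter, each of height $r_i$, so that the total area is $r_i$ times the semiperimeter.

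Next, since $C_i \in [\lambda_i]$ means $A_i = \lambda_i L_i^2$, combining this with the displayed identity gives $\lambda_i L_i^2 = \frac{1}{2} r_i L_i$, hence
$$\lambda_i L_i = \frac{1}{2} r_i, \qquad i = 1, \ldots, n.$$
(Here $\lambda_i$ is scale-invariant, as it must be, since under a dilation both $r_i$ and $L_i$ scale linearly.) Consequently the condition $\lambda_1 L_1 = \lambda_2 L_2 = \cdots = \lambda_n L_n$ appearing in Proposition~\ref{p4} is literally equivalent to $r_1 = r_2 = \cdots = r_n$.

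Finally I would invoke Proposition~\ref{p4} directly: the hypotheses of Corollary~\ref{c4} (the regions $D_i$ have pairwise disjoint interiors, the cylinders $E_i = D_i \times [0,h]$ share the common height $h$, and their total surface area equals the fixed value $S$) are precisely those of Proposition~\ref{p4}, so the total volume $V$ of the cylinders attains its smallest value if and only if $\lambda_1 L_1 = \cdots = \lambda_n L_n$, which by the previous paragraph is the same as $r_1 = \cdots = r_n$. I do not anticipate any real obstacle here: the only thing to check carefully is the area identity $A_i = \frac{1}{2} r_i L_i$ in the two admissible cases, and all the analytic substance of the statement — existence, uniqueness, and the verification that the critical configuration is a genuine global minimum via the bordered Hessian and strong convexity — has already been carried out in the proof of Proposition~\ref{p4}.
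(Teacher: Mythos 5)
Your proposal is correct and is essentially the proof the paper intends: Corollary~\ref{c4} follows from Proposition~\ref{p4} via the identity $A_i = \frac{1}{2} r_i L_i$, which gives $\lambda_i L_i = \frac{1}{2} r_i$ and hence the equivalence of $\lambda_1 L_1 = \cdots = \lambda_n L_n$ with $r_1 = \cdots = r_n$, exactly as Corollary~\ref{c1} is deduced from Proposition~\ref{p1}. The paper leaves this proof implicit, but your argument matches the pattern it establishes.
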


  In the case of higher dimensions we can repeat almost verbatim what was said above about the case of $\mathds{R}^3$. Namely, consider $m > 3$. Let $C$ be a hypersurface in $\mathds{R}^m$, i.e. a submanifold of $\mathds{R}^m$ of dimension $m-1$. Assume that

  \noindent (1) $C$ is homeomorphic to the hypersphere $S^{m-1}$.

 \noindent (2) $C$ is piecewise smooth (i.e. piecewise of class $C^{(1)}$).

 Then we can speak about the surface area $A$ of $C$ and the volume $V$ of the bounded region in $\mathds{R}^m$ with the boundary $C$. We will say that $C \in [\lambda]$ if $V = \lambda A^{\frac{m}{m-1}}$.

 Notice that in light of the isoperimeric inequality and the well known formulas for the surface area $A$ of an $(m-1)
 $-dimensional sphere and the volume $V$ of an $m$-dimensional ball in $\mathds{R}^m$ of radius $R$,
 $$ A = \frac{m \pi^{m/2} R^{m-1}}{\Gamma(1 + m/2)}, $$
 $$ V = \frac{ \pi^{m/2} R^m}{\Gamma(1 + m/2)}, $$
 we have

 $$ 0 < \lambda \leq \sqrt[m-1]{\frac{\Gamma(1 + m/2)}{\pi^{m/2} m^m}}, $$
 where the equality is attained if and only if $C$ is a hypersphere.

 \begin{proposition} \label{p3} Let $n, m$ be positive integers, $n, m \geq 2$ and $A$ be a positive real number. Let $C_1, \ldots , C_n$ be hypersurfaces in $\mathds{R}^m$, such that

 \noindent (a)  $C_i$ is homeomorphic to the hypersphere $S^{m-1}$, $i= 1, \cdots , n$.

 \noindent (b)  $C_i$ is piecewise smooth (i.e. piecewise of class $C^{(1)}$), $i = 1, \cdots , n$.

\noindent (c)  $$C_i \in [\lambda_i], i = 1, \ldots, n.$$
(d) The bounded regions $E_1, \ldots , E_n$ with the boundaries $C_i$  are pairwise disjoint.

 \noindent (e)
 $$ A_{C_1} + \ldots A_{C_n} = A.$$
 Then the total volume
 $$ V = V_{C_1} + \ldots + V_{C_n}$$
 takes its smallest value if and only if
 $$\lambda_1^{m-1} A_{C_1} = \lambda_2^{m-1} A_{C_2} = \ldots = \lambda_n^{m-1} A_{C_n}.$$

 \end{proposition}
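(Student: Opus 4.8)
The plan is to follow the proof of Proposition~\ref{p2} almost verbatim, replacing the exponent $3/2$ by $\frac{m}{m-1}$ throughout and replacing the exponent $2$ in the optimality relation by $m-1$. First I would settle the case $n=2$. Writing $A_1=A_{C_1}$ and $A_2=A_{C_2}=A-A_1$, the total volume becomes the one-variable function
$$ V(A_1)=\lambda_1 A_1^{\frac{m}{m-1}}+\lambda_2(A-A_1)^{\frac{m}{m-1}},\qquad 0\le A_1\le A. $$
Differentiating,
$$ \frac{dV}{dA_1}=\frac{m}{m-1}\Big(\lambda_1 A_1^{\frac{1}{m-1}}-\lambda_2(A-A_1)^{\frac{1}{m-1}}\Big), $$
which is negative at $A_1=0$ and positive at $A_1=A$, so the minimum is attained at an interior critical point; raising the critical-point equation $\lambda_1 A_1^{1/(m-1)}=\lambda_2(A-A_1)^{1/(m-1)}$ to the power $m-1$ gives $\lambda_1^{m-1}A_1=\lambda_2^{m-1}A_2$. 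Since $\frac{m}{m-1}>1$, the map $t\mapsto t^{m/(m-1)}$ is strictly convex on $[0,\infty)$, hence $V$ is strictly convex on $(0,A)$; equivalently, for $0<A_1<A$,
$$ \frac{d^2V}{dA_1^2}=\frac{m}{(m-1)^2}\Big(\lambda_1 A_1^{\frac{2-m}{m-1}}+\lambda_2(A-A_1)^{\frac{2-m}{m-1}}\Big)>0. $$
Therefore the interior critical point is the unique global minimizer, which disposes of $n=2$.

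For the general case I would argue exactly as in Proposition~\ref{p2}. The function $V=\sum_{i=1}^n\lambda_i A_i^{m/(m-1)}$ is continuous on the compact set
$$ K=\{(A_1,\ldots,A_n):A_i\ge 0 \;\mathrm{and}\; A_1+\ldots+A_n=A\}, $$
so it attains its minimum at some point $(\tilde A_1,\ldots,\tilde A_n)$. If there were indices $i<j$ with $\lambda_i^{m-1}\tilde A_i\neq\lambda_j^{m-1}\tilde A_j$, then, holding $\tilde A_i+\tilde A_j$ fixed and applying the case $n=2$ to the pair $(A_i,A_j)$, one could replace $(\tilde A_i,\tilde A_j)$ by the optimal pair $(\hat A_i,\hat A_j)$ with $\hat A_i+\hat A_j=\tilde A_i+\tilde A_j$ and $\lambda_i^{m-1}\hat A_i=\lambda_j^{m-1}\hat A_j$, strictly decreasing $V$ — a contradiction. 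Hence $\lambda_1^{m-1}A_{C_1}=\cdots=\lambda_n^{m-1}A_{C_n}$ at the minimizer, and conversely this system together with $\sum_i A_{C_i}=A$ determines a single point of $K$, so the minimum is attained there and nowhere else.

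I do not anticipate a genuine obstacle; the only point calling for a little care is the behavior of $V$ at the endpoints $A_1\in\{0,A\}$ when $m>2$, where $\frac{2-m}{m-1}<0$ and the second derivative is unbounded. This is harmless: $V$ is continuous on $[0,A]$ and convex on the open interval, hence convex on all of $[0,A]$, while the sign of $dV/dA_1$ at the endpoints already shows the minimizer is interior, where strict convexity yields uniqueness. (For $m=2$ one recovers Proposition~\ref{p1}, since then $V=\lambda A^2$ is the area of a region of perimeter $A$ in the sense of Definition~\ref{d1}.)
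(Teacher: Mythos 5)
Your proof is correct and follows exactly the route the paper intends: the authors give no separate argument for Proposition~\ref{p3}, stating only that one repeats the proof of Proposition~\ref{p2} almost verbatim, and your write-up is precisely that repetition with $3/2$ replaced by $\frac{m}{m-1}$, including the same one-variable calculus for $n=2$ and the same compactness-plus-exchange argument for general $n$. Your added care about the endpoint behavior of the second derivative is a sensible touch but does not change the substance.
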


 \begin{corollary} \label{c3} Let $n, m$ be positive integers, $n, m \geq 2$ and $A$ be a positive real number. Let $C_1, \ldots , C_n$ be hypersurfaces in $\mathds{R}^m$, such that

 \noindent (a)  $C_i$ is either a hypersphere of radius $r_i$ or a convex tangential polytop (in particular, a regular polytop) with the radius of the inscribed hypersphere $r_i$.

\noindent (b) The bounded regions $E_1, \ldots , E_n$ with the boundaries $C_i$  are pairwise disjoint.

 \noindent (e)The total surface area of the hypersurfaces $C_i$ is fixed.
 $$ A_{C_1} + \ldots A_{C_n} = A.$$
 Then the total volume
 $$ V = V_{C_1} + \ldots + V_{C_n}$$
 takes its smallest value if and only if
 $$r_1 = r_2 = \cdots = r_n.$$

 \end{corollary}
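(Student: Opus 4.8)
The plan is to reduce Corollary~\ref{c3} to Proposition~\ref{p3} exactly as Corollary~\ref{c2} was reduced to Proposition~\ref{p2}, the only genuinely new ingredient being the $m$-dimensional analogue of the elementary relations $A_i=\tfrac12 r_iL_i$ and $V(E_i)=\tfrac13 r_iA_i$ already used in Corollaries~\ref{c1} and~\ref{c2}. First I would check that each $C_i$ in the statement satisfies the hypotheses of Proposition~\ref{p3}: the boundary of a convex polytope is a finite union of flat facets, hence homeomorphic to $S^{m-1}$ and piecewise of class $C^{(1)}$, and a hypersphere is smooth, so (a) and (b) hold; condition (d) is precisely the disjointness assumed here; and setting $\lambda_i:=V_{C_i}/(A_{C_i})^{m/(m-1)}$ makes $C_i\in[\lambda_i]$ automatic, so (c) holds as well, while (e) is common to both statements.

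Next I would establish the key identity
$$ V_{C_i}=\frac{r_i}{m}\,A_{C_i},\qquad i=1,\ldots,n. $$
For a hypersphere of radius $r_i$ this is immediate from the formulas for $A$ and $V$ recorded just before Proposition~\ref{p3}: dividing $V=\pi^{m/2}R^m/\Gamma(1+m/2)$ by $A=m\pi^{m/2}R^{m-1}/\Gamma(1+m/2)$ gives $V=(R/m)A$. For a convex tangential polytope one decomposes the solid region $E_i$ into the $m$-dimensional pyramids obtained by coning each facet $F$ of the polytope to the centre of the inscribed hypersphere; because that centre lies at distance exactly $r_i$ from the affine hyperplane of every facet, each such pyramid has $m$-volume $\frac{r_i}{m}\,\mathrm{vol}_{m-1}(F)$, and summing over the facets gives $V_{C_i}=\frac{r_i}{m}\sum_F\mathrm{vol}_{m-1}(F)=\frac{r_i}{m}A_{C_i}$.

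Combining this identity with $V_{C_i}=\lambda_i(A_{C_i})^{m/(m-1)}$ yields $\lambda_i=\frac{r_i}{m}(A_{C_i})^{-1/(m-1)}$, hence
$$ \lambda_i^{\,m-1}A_{C_i}=\frac{r_i^{\,m-1}}{m^{\,m-1}},\qquad i=1,\ldots,n. $$
Therefore the minimality criterion $\lambda_1^{m-1}A_{C_1}=\cdots=\lambda_n^{m-1}A_{C_n}$ furnished by Proposition~\ref{p3} is equivalent to $r_1^{\,m-1}=\cdots=r_n^{\,m-1}$, and since all radii are positive this says exactly $r_1=\cdots=r_n$, as claimed.

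The only step requiring real care is the pyramid decomposition for general tangential polytopes in $\mathds{R}^m$: one must verify that coning the facets to the incenter produces pyramids with pairwise disjoint interiors whose union is all of $E_i$. This follows from convexity, since every point of $E_i$ lies on a segment from the incenter to a boundary point, and that boundary point lies in the relative interior of a unique facet for all but a measure-zero set of directions; the distance from the incenter to the carrying hyperplane of that facet is $r_i$, which is what makes the height in the cone-volume formula equal to $r_i$. Everything else is the bookkeeping indicated above, and no compactness or induction argument beyond what is already in Proposition~\ref{p3} is needed.
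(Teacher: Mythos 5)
Your proof is correct and follows exactly the route the paper intends: the paper states Corollary~\ref{c3} without proof, but its proof of Corollary~\ref{c2} makes clear that the intended argument is precisely your reduction to Proposition~\ref{p3} via the identity $V_{C_i}=\frac{r_i}{m}A_{C_i}$, which gives $\lambda_i^{m-1}A_{C_i}=r_i^{m-1}/m^{m-1}$. Your pyramid decomposition of a tangential polytope and the verification that $\lambda_i^{m-1}A_{C_i}$ depends only on $r_i$ correctly supply the details the authors left implicit.
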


 \begin{remark} \label{r1} It is well known that there are six non-isomorphic regular polytopes in $\mathds{R}^4$ and only three in $\mathds{R}^m$ if $m \geq 5$. For details the reader is referred to~\cite{Co}.

 \bigskip

 \textbf{Acknowledgement}. We are very grateful to Eric Grinberg who carefully read a draft of this paper and made numerous suggestions implemented in the current version.

 \end{remark}

\end{document}